\newtheorem{thm}{Theorem}[section]
\newtheorem{lemma}[thm]{Lemma}
\newtheorem{proposition}[thm]{Proposition}
\newtheorem{remarks}[thm]{Remark}
\theoremstyle{definition}
\newtheorem{defn}{Definition}[section]
\theoremstyle{remark}
\newcommand{\ee}{\mathbb{E}}
\newcommand{\nn}{\mathbb{N}}
\newcommand{\pp}{\mathbb{P}}
\newcommand{\GG}{\mathcal{G}}
\def\AA{\mathcal A}
\def\DD{\mathcal D}
\def\FF{\mathcal F}
\def\EE{\mathcal E}
\def\HH{\mathcal H}
\def\beq{\begin{equation}}
\def\nneq{\end{equation}}
\def\bdef{\begin{defn}}
\def\ndef{\end{defn}}
\def\bthm{\begin{thm}}
\def\nthm{\end{thm}}
\def\bprop{\begin{prop}}
\def\nprop{\end{prop}}
\def\brmk{\begin{remarks}}
\def\nrmk{\end{remarks}}
\def\bexa{\begin{exa}}
\def\nexa{\end{exa}}
\def\blem{\begin{lem}}
\def\nlem{\end{lem}}
\def\bcor{\begin{cor}}
\def\ncor{\end{cor}}
\def\bexe{\begin{exe}}
\def\nexe{\end{exe}}
\def\bprf{\begin{proof}}
\def\nprf{\end{proof}}
\def\bdes{\begin{description}}
\def\ndes{\end{description}}
\def\e{\varepsilon}
\def\<{\langle}
\def\>{\rangle}
\title[CLT and MDP for a stochastic Cahn-Hilliard equation ]{Central limit theorem and moderate deviations for a stochastic Cahn-Hilliard equation}
\author{Ruinan Li}
\address{Ruinan Li \\
School of Statistics and Information, Shanghai University of International Business and Economics, Shanghai, 201620,  PR China.}
\email{ruinanli@amss.ac.cn}
\author{Xinyu Wang}
\address{Xinyu Wang \\
School of Mathematics and Statistics, Huazhong University of Science and Technology,  Wuhan,  430074, China.}
\email{wang\_xin\_yu@hust.edu.cn}
\date{}
\begin{document}
\maketitle

 \noindent {\bf Abstract:}
 In this paper, we prove a central limit theorem and   a moderate deviation principle for a perturbed  stochastic Cahn-Hilliard equation
 defined on $[0,T]\times [0,\pi]^d$ with $d\in\{1, 2, 3\}$. This equation is driven by a space-time white noise.  The weak convergence approach  plays an important role.
 \vskip0.3cm

 \noindent{\bf Keyword:} { Stochastic  Cahn-Hilliard equation;  Large deviations; Moderate deviations; Central limit theorem.
}
 \vskip0.3cm

\noindent {\bf MSC: } { 60H15, 60F05, 60F10.}
\vskip0.3cm

\section{Introduction}

\noindent Since the pioneer works of Freidlin and Wentzell \cite{FW}, the theory of small perturbation large deviation  principle (LDP for short) for stochastic (partial) differential equations has been extensively developed,  see  monographs \cite{DPZ,DZ,DE}, and  papers  \cite{BDM, CM,  CR,  RZ, XZ} and references therein for further developments.  Like the large deviations, the moderate deviation  problems arise in the theory of statistical  inference quite naturally. The  moderate deviation  principle (MDP for short) can provide us with the rate  of convergence and a useful method for constructing asymptotic confidence intervals, see \cite{ DGS,Erm, GZ}  and references therein.

 The problem of moderate deviations for stochastic partial differential equations  has been receiving much attention in very recently years, such as Wang and Zhang \cite{WZ} for the stochastic reaction-diffusion equations, Wang {\it et al.} \cite{WZZ} for the stochastic Navier-Stokes equations, Li {\it et al.} \cite{LWYZ} for the fractional stochastic heat equation, Yang and Jiang \cite{YJ} for the fourth-order stochastic heat equations with fractional noises, Gao and Wang \cite{GW} for   the forward-backward stochastic differential equations. By using a new weak convergence approach,  Budhiraja {\it et al.} \cite{BDG} and Dong {\it et al.} \cite{DXZZ} studied the MDP for stochastic dynamics  with jumps.

  Since the work of Da Prato  and  Debussche  \cite{DPD}, it has  attracted much attention in the study of stochastic  Cahn-Hilliard equation, which  describes phase separation in a binary alloy  in the presence of thermal fluctuations (see \cite{BMW}). For example,    see  Cardon-Weber \cite{CW} for  the stochastic Cahn-Hilliard equation driven by the Guassian space-time white noise, Bo and Wang \cite{BW} for    the  L\'evy  space-time white noise case, Bo {\it et al.} \cite{BJW} for the fractional noises case, and so on.  The ergodic properties and invariant
measures of the stochastic  Cahn-Hilliard equation with degenerate noise were studied in Gouden\`ege  and Manca \cite{GM}. The sharp interface limits  for the stochastic  Cahn-Hilliard equation are extensively studied,  see Funaki \cite{F} and references therein for further developments.

\vskip 0.3cm
Wentzell-Freidlin's large deviation results for stochastic  Cahn-Hilliard equation  have been established in   \cite{STW} and \cite{BM}. In this paper, we shall study the     central limit theorem and  MDP for this equation.

\vskip 0.3cm

The rest of this paper is organized as follows. In Section 2, we give the framework of the stochastic Cahn-Hilliard equation. In Section 3, we prove the central limit theorem. Section 4 is devoted to the proof of the  MDP by using the weak convergence method.

\vskip0.3cm

Throughout the paper,  the generic positive constant $C$ may change from line to line. If it is essential, the dependence of a constant $C$ on some parameters, say $T$, will be written by $C(T)$. We denote the space variable by $x$ and the space integral by $\int \cdot  dx$,  and for any $p\ge 1$ denote $\|\cdot\|_p$ the $L^p$-norm with respect to $dx$.

\vskip0.3cm
Let  $\DD=[0,\pi]^d$ with $d\in\{1,2,3\}$. For any $T>0, p\geq 1,\alpha\in(0,1)$,  let $C^{\alpha}([0,T], L^p(\DD))$  be the H\"{o}lder space equipped with the norm defined by
$$
\|f\|_{\alpha, p}:=\sup_{t\in[0,T]}\|f(t,\cdot)\|_p +\sup_{s\neq t,s,t\in[0,T]}\frac{\|f(t,\cdot)-f(s,\cdot)\|_p}{|t-s|^\alpha},
$$
 and
$$
C^{\alpha,0}([0,T], L^p(\DD)):=\left\{ f\in C^{\alpha}([0,T], L^p(\DD)); \lim_{\delta\rightarrow0} O_{f}(\delta)=0\right\},
$$
where $O_{f}(\delta):=\sup_{|t-s|<\delta}\frac{\|f(t,\cdot)-f(s,\cdot)\|_p}{|t-s|^{\alpha}}$.  Then  $C^{\alpha,0}([0,T],L^p(\DD))$ is a Polish space,  which is denoted  by $\EE_\alpha$.

\section{Framework and main results}

\subsection{Framework}

For any $T>0,\e>0$, consider
\begin{equation}\label{SPDE}
    \begin{cases}
     \frac{\partial u^{\e}}{\partial t}(t,x)=-\Delta\left(\Delta u^{\e}(t,x)-f(u^{\e}(t,x))\right)+\sqrt{\e}\sigma(u^\e(t,x))\dot{W}(t,x),
     \text{  in }[0,T]\times\DD,\\
      u^\e(0,x)=u_{0}(x),\\
     \frac{\partial u^\e}{\partial {\bf n}}(t,x)=\frac{\partial \Delta u^\e}{\partial {\bf n}}(t,x)=0,  \text{  on }[0,t]\times\partial\DD,
    \end{cases}
\end{equation}
where   $\Delta u^{\e}$ denotes the Laplacian of $u^{\e}$ in the $x$-variable; $\dot{W}$ is a   Gaussian space-time white noise on some probability space $(\Omega, \FF,\{\FF_t\}_{t\geq 0}, \pp)$; ${\bf n}$ is the outward normal vector; the functions $\sigma, f$ and  $u_0$ satisfy the following {\bf Hypothesis (H)}:
\begin{quote}
\begin{itemize}
        \item[({\bf H.1})] $\sigma$ is a bounded and  Lipschitz  function;

        \item[({\bf H.2})] $f$ is a  polynomial of degree $3$ with positive dominant coefficient;

        \item[({\bf H.3})] $u_0$ is a continuous function on $\DD$ which belongs to $L^p(\DD)$ for $p\geq 4$;
        \item[({\bf H.4})] $u_0$ is a $\gamma$-H\"{o}lder continuous function on $\DD$ with $\gamma \in (0, 1)$.
\end{itemize}
\end{quote}

\vskip0.3cm

According to Cardon-Weber \cite{CW}, under hypothesis ({\bf H.1})-({\bf H.3}), Eq.\eqref{SPDE} admits a unique solution $u^\e$ in the following mild form:
\begin{align}\label{SPDE solution}
u^{\e}(t,x)=&\int_{\DD}G_t(x,y)u_0(y)dy +\int_0^t\int_{\DD}\Delta G_{t-s}(x,y)f(u^{\e}(s,y))dyds\notag\\
           &+ \sqrt{\e}\int_0^t\int_{\DD} G_{t-s}(x,y)\sigma(u^{\e}(s,y))W(ds,dy),
\end{align}
 where $G_t(\cdot, \cdot)$  is the Green function corresponding to the operator $\frac{\partial}{\partial t}+\Delta^2$ with Neumann boundary conditions,   which satisfies
\beq\label{eq u e estimate}
\sup_{\e\leq 1}\sup_{t\in[0,T]}\ee\left[\left\|u^{\e}(t,\cdot)\right\|_p^q\right]<+\infty,
\nneq
for $q\geq p$,
see \cite[Theorem 1.3]{CW}. Moreover, Therorem 1.4 in \cite{CW} asserts that under hypothesis ({\bf H}), the trajectories of the solution are a.s. $\alpha$-H\"{o}lder continuous in $t$ with $\alpha\leq \frac{\gamma}{4}, \alpha < \frac{1}{2}(1-\frac{d}{4})$. Consequently, the random field solution $\{u^\e(t,x); (t,x)\in[0, T]\times \DD\}$ to Eq.\eqref{SPDE} lives in space $\EE_\alpha$.

\vskip0.3cm
Intuitively, as the parameter $\e$ tends to zero, the solution $u^\e$ of $(\ref{SPDE})$ will tend to the solution of the deterministic  equation
\beq\label{eq u0}
u^0(t,x)=\int_\DD G_t(x,y)u_0(y)dy+ \int_0^t\int_{\DD}\Delta G_{t-s}(x,y)f(u^0(s,y))dyds.
\nneq
\vskip0.3cm
Particularly, taking $\e=0$ in Eq.$(\ref{SPDE solution})$,   the   solution of \eqref{eq u0}  has the following estimate
\beq\label{eq u 0 estimate}
\sup_{t\in[0,T]}\left\|u^0(t,\cdot)\right\|_p<+\infty.
\nneq
In this paper, we shall investigate deviations of $u^\e$ from  $u^0$, as $\e$ decreases to $0$. That is  the
asymptotic behavior of the trajectories,
\beq\label{U}
Z^\e(t,x):=\frac{1}{\sqrt{\e}h(\e)}(u^\e-u^0)(t,x),\quad(t,x)\in [0,T]\times \DD.
\nneq
\begin{itemize}
  \item[(LDP)]
 The case $h(\e)=1/\sqrt\e$ provides some large deviation  estimates.  Shi {\it et al.} \cite{STW},   Boulanba and  Mellouk  \cite{BM} proved that the law of the solution $u^{\e}$  satisfies an LDP.
  \item[(CLT)]
  If $h(\e)\equiv 1$, we are in the domain of the central limit theorem (CLT for short).
We will show that $(u^\e-u^0)/\sqrt\e$ converges  to a  random field  as $\e\rightarrow 0$,   see Theorem \ref{CLT} below.
  \item[(MDP)]
  To fill in the gap between the CLT scale and the large deviations scale,
we will study moderate deviations, that is when the deviation scale satisfies
\begin{equation} \label{h}
 h(\e)\to+\infty \ \ \text{and }\quad\sqrt\e h(\e)\to0,\quad \text{as}\quad\e\to0.
 \end{equation}
 In this case, we will prove that $Z^{\e}$ satisfies an LDP,  see Theorem \ref{MDP} below.  This special type of LDP is called the MDP for $u^\e$, see \cite[Section 3.7]{DZ}.

\end{itemize}

Throughout this paper, we assume that \eqref{h} is in place.

\subsection{Main results}

Let
\begin{equation}\label{eq Y}
    \begin{cases}
     \frac{\partial Y}{\partial t}(t,x)=-\Delta\left(\Delta Y(t,x)-f'(u^0(t,x))Y(t,x)\right)+\sigma(u^0(t,x))\dot{W}(t,x),
     \text{  in }[0,T]\times\DD,\\
     Y(0,x)=0,\\
     \frac{\partial Y}{\partial {\bf n}}(t,x)=\frac{\partial \Delta Y}{\partial {\bf  n}}(t,x)=0,\quad   \text{  on }[0,T]\times \partial\DD.\\
     \end{cases}
\end{equation}

Following the proofs of  Theorems 1.3 and 1.4 in \cite{CW}, it is easy to obtain that
under ${\bf(H)}$,  Eq.\eqref{eq Y} admits a unique solution $Y$,  which satisfies
\beq\label{eq Y estimate}
\sup_{t\in[0,T]}\ee\left[\|Y(t,\cdot)\|_p^q\right]<+\infty,
\nneq
for $q\geq p$.
Furthermore, for any $t,t' \in[0,T]$,
\begin{equation}\label{holder-Y}
\ee\left[\left\|Y(t',\cdot)-Y(t,\cdot)\right\|_p^q\right] \leq C|t'-t|^{\alpha q}, \ \  \pp\text{-a.s.}
\end{equation}
with $\alpha\leq \frac{\gamma}{4}, \alpha < \frac{1}{2}(1-\frac{d}{4})$ .
  \vskip0.3cm

Our first main result is the following central limit theorem.
\bthm \label{CLT}
{\rm
Under   {\bf(H)}, for any $\alpha\leq \frac{\gamma}{4}$ and $\alpha < \frac{1}{2}(1-\frac{d}{4})$, the random field $(u^\e-u^0)/\sqrt{\e}$ converges in probability to a random field $Y$ defined by  \eqref{eq Y} on $\EE_\alpha$.
  }
\nthm

Noticing that the coefficients $f'(u^0)$ and $\sigma(u^0)$ are bounded and independent of $Y$, Eq.\eqref{eq Y} becomes a fourth-order stochastic heat equation, by using the classical Freidlin-Wentzell's large deviation theory  (e.g., \cite{JSW}),   it is easily to obtain that $Y/h(\e)$ obeys an LDP on $\EE_\alpha$ with the speed $h^2(\e)$ and with the good rate function:
\begin{equation}\label{rate function1}
 I(g):=\left\{
       \begin{array}{ll}
         \inf_v \left\{\frac12\int_0^T\int_{\DD}v^2(t,x)dxdt;  Z^v=g\right\},   & \hbox{\text{if} $g\in \textit{Im}(Z^{\cdot})$;}\\
        +\infty, & \hbox{\text{otherwise},}
       \end{array}
     \right.
\end{equation}
where the infimum is taken over all $v\in L^2([0,T]\times \mathcal D)$ and the functional $Z^v$ is the solution of the following  deterministic partial differential equation
\begin{align}\label{eq sk}
Z^{v}(t,x)=&\int_0^t\int_{\DD} G_{t-s}(x,y)\sigma(u^0(s,y))v(s,y)dyds\notag\\
            &+\int_0^t\int_{\DD} \Delta G_{t-s}(x,y)f'(u^0(s,y))Z^v(s,y)dyds.
\end{align}

\vskip0.3cm

Our second main result is that the law of $\{(u^\e-u^0)/[\sqrt{\e}h(\e)]; \e\in (0,1]\}$ obeys the same LDP with  $Y/h(\e)$, that is  the following theorem.
\bthm\label{MDP}{\rm Under  {\bf(H)},  the family $\{(u^\e-u^0)/[\sqrt{\e}h(\e)];\e\in (0,1]\}$ satisfies a
large deviation  principle on $\EE_\alpha$ with the speed function $h^2(\e) $ and with the good rate function $I$ given by \eqref{rate function1}.  More precisely,  the following  statements hold:
       \begin{itemize}
         \item[$(a)$] for each $M<\infty$, the level set $\{ f\in\EE_\alpha;I(f)\leq M\}$ is a compact subset of $\EE_\alpha$;
         \item[$(b)$] for each closed subset $F$ of $\EE_\alpha$,
              $$
                \limsup_{\e\rightarrow 0}\frac1{h^2(\e)}\log\mathbb{P}\left(\frac{u^\e-u^0}{\sqrt{\e}h(\e)}\in F\right)\leq- \inf_{f\in F}I(f);
              $$
         \item[$(c)$] for each open subset $G$ of $\EE_\alpha$,
              $$
                \liminf_{\e\rightarrow 0}\frac1{h^2(\e)}\log\mathbb{P}\left(\frac{u^\e-u^0}{\sqrt{\e}h(\e)}\in G\right)\geq- \inf_{f\in G}I(f).
              $$
       \end{itemize}
        }
\nthm

\brmk {\rm
\begin{itemize}
 \item [(1)] It is well-known that the   Freidlin-Wentzell's theory  is very powerful for   the  studies  of varies  asymptotics and their applications to problems of the behavior of a random process on large time intervals, such as the problem of the limit behavior of the invariant measure, the problem of exit of a random process from a domain, and the problem of stability under random perturbations, see \cite{FW}.  The reader is referred to the recent work by   Tao \cite{tao} for a lot of literature on the development of numerical methods.

  Usually,  the relatively simple form of the corresponding rate function in Theorem \ref{MDP} suggests
that many of the constructions needed to implement an effective importance sampling scheme
 would be simpler than in the corresponding large deviation context, e.g., Klebaner and Liptser \cite{Klebaner1999},  Dupuis and Wang \cite{DW}.  Refer to Budhiraja {\it {\it et al.}} \cite{BDG} for  further development and references.
 \item [(2)] By using the classical exponential approximation method,  Yang and Jiang \cite{YJ} proved a  MDP  for a fourth-order stochastic heat equation  with fractional noises, in which  the drift term $f$ together with its  derivative are assume  to be Lipschitz and the diffusion coefficient $\sigma=1$.    Due to the non-Lipschitz and nonlinearity of $\Delta f$ and the multiplicative noise  in  \eqref{SPDE},  it is difficult to get some exponential estimates and the classical exponential approximation method  would become rather complicated   for  stochastic  Cahn-Hilliard equation  \eqref{SPDE}.  Here, we use the weak convergence approach \cite{BDM, DE}, in which
       one only needs  some   moment estimates.
 \end{itemize}
}
\nrmk

\section{Central Limit Theorem}
\subsection{Preparation}

First, let us recall some estimations on the Green function $G$ which will be used quiet often later. These properties can be found in \cite{BJW,CW}.

\begin{lemma}\label{lemma1}
\begin{itemize}
\item[1.]
{\rm
There exists some positive constant $C$ such that for any $t\in(0,T]$ and $x\in \DD$,}
\beq\label{lem1}
\int_\DD |G_t(x,y)|^2dy\leq Ct^{-\frac d4}.
\nneq

\item[2.]
{\rm
For $\gamma<1-\frac d4$, there exists $C>0$ such that for any $0\le t_0<t\le T$ and $x\in\DD$,
\beq\label{lem2}
\int_{t_0}^t\int_\DD|G_{t-s}(x,y)|^2dyds\leq C|t-t_0|^\gamma.
\nneq
}

\item[3.]
{\rm
Suppose that $\rho \in [1,\infty), p\in [\rho,\infty), \beta\ge 1$ and $\kappa=\frac 1p-\frac {1}{\rho}+1 \in [0,1]$
(if $d=3$, we also need $\frac 1\kappa<3$; and if $d=2, \frac 1\kappa\neq \infty$).
For $v\in L^\beta([0,T],L^{\rho}(\DD)), 0\le t_0<t\le T$ and $x \in\DD$, define
$$
J(v)(t_0,t,x):=\int_{t_0}^t\int_\DD\Delta G_{t-s}(x,y)v(s,y)dyds.
$$
Then $J$ is a bounded operator from $L^\beta([0,T],L^{\rho}(\DD))$ into $L^\infty([0,T],L^p(\DD))$ and there exists a constant $C>0$ such that the following inequalities hold:
\begin{enumerate}

\item
for all $\beta > \frac{1}{\frac12+\frac d4(\kappa-1)}$,
\beq\label{lem32}
\|J(v)(t_0,t,\cdot)\|_p \leq Ct^{\frac12+\frac d4(\kappa-1)-\frac{1}{\beta}}\|v(\ast,\cdot)\|_{L^\beta([t_0,t],L^{\rho}(\DD))},
\nneq
with $0\le t_0 < t \le T$.
\item
for any  $\gamma \in \left(0,\frac12+\frac d4(\kappa-1)\right)$, $\beta > \frac{1}{\frac12+\frac d4(\kappa-1)-\gamma}$,
\beq\label{lem33}
\left\|J(v)(0,t',\cdot)-J(v)(0,t,\cdot)\right\|_p \le C|t'-t|^\gamma\|v(\ast,\cdot)\|_{L^\beta([0,T],L^{\rho}(\DD))},
\nneq
with $0\le t< t'\le T.$

\end{enumerate}
}

\end{itemize}
\end{lemma}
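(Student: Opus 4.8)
The plan is to obtain everything from the spectral representation of the Green function for $\partial_t+\Delta^2$ with Neumann conditions on $\DD=[0,\pi]^d$. Let $\{\varphi_k\}_{k\in\nn^d}$ be the associated $L^2(\DD)$-orthonormal eigenfunctions of $-\Delta$ (tensor products of normalized cosines, with $-\Delta\varphi_k=|k|^2\varphi_k$ and $\sup_k\|\varphi_k\|_\infty<\infty$), so that $G_t(x,y)=\sum_{k\in\nn^d}e^{-|k|^4t}\varphi_k(x)\varphi_k(y)$ and $\Delta^jG_t(x,y)=\sum_{k\in\nn^d}(-|k|^2)^je^{-|k|^4t}\varphi_k(x)\varphi_k(y)$. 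For \eqref{lem1}, Parseval's identity in $y$ gives $\int_\DD|G_t(x,y)|^2dy=\sum_{k\in\nn^d}e^{-2|k|^4t}\varphi_k(x)^2\le C\sum_{k\in\nn^d}e^{-2|k|^4t}$, and comparing this sum with $\int_{(0,\infty)^d}e^{-2t|\xi|^4}\,d\xi=Ct^{-d/4}$ (the summand being decreasing in each coordinate) for small $t$, and estimating it trivially for $t$ bounded away from $0$, yields $\le Ct^{-d/4}$. For \eqref{lem2}, integrating the same identity in $s$ over $[t_0,t]$ gives $\int_{t_0}^t\int_\DD|G_{t-s}(x,y)|^2dy\,ds\le C\sum_{k\in\nn^d}\frac{1-e^{-2|k|^4(t-t_0)}}{2|k|^4}$; the $k=0$ term is $\le C(t-t_0)\le C(t-t_0)^\gamma$, and for $k\ne0$ the elementary inequality $1-e^{-u}\le u^\gamma$ (valid for $u\ge0$, $\gamma\in[0,1]$) bounds the $k$-th term by $C(t-t_0)^\gamma|k|^{4\gamma-4}$, whose sum over $k\ne0$ converges exactly when $4-4\gamma>d$, i.e. $\gamma<1-d/4$.

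For part 3 I would work from the standard scaling bounds for the biharmonic Neumann kernel — derivable from the spectral sum by comparison with the whole-space biharmonic heat kernel, or quoted from \cite{BJW,CW} — which state that for each $j\ge0$ and $m\in[1,\infty]$, $\sup_{y}\|\Delta^jG_r(\cdot,y)\|_{L^m(\DD)}\le Cr^{-\frac j2-\frac d4(1-\frac1m)}$ uniformly in $r\in(0,T]$. With $\frac1m=\kappa=\frac1p-\frac1\rho+1$ (so that $1+\frac1p=\frac1m+\frac1\rho$, and with $\frac1\kappa<3$ when $d=3$, $\frac1\kappa\ne\infty$ when $d=2$, which are precisely the conditions making the exponent $\frac12+\frac d4(\kappa-1)$ positive), Young's inequality — after the even reflection identifying the Neumann problem with a periodic one — gives, for each $s$, $\big\|\int_\DD\Delta G_r(\cdot,y)v(s,y)\,dy\big\|_p\le Cr^{\frac d4(\kappa-1)-\frac12}\|v(s,\cdot)\|_\rho$. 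Hence $\|J(v)(t_0,t,\cdot)\|_p\le C\int_{t_0}^t(t-s)^{\frac d4(\kappa-1)-\frac12}\|v(s,\cdot)\|_\rho\,ds$, and Hölder's inequality in $s$ with exponents $\beta,\beta'$ produces \eqref{lem32}; the hypothesis $\beta>(\frac12+\frac d4(\kappa-1))^{-1}$ is exactly the condition that $(t-s)^{(\frac d4(\kappa-1)-\frac12)\beta'}$ be integrable on $(t_0,t)$.

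For \eqref{lem33} I would split $J(v)(0,t',\cdot)-J(v)(0,t,\cdot)=\int_t^{t'}\Delta G_{t'-s}*v(s)\,ds+\int_0^t\big(\Delta G_{t'-s}-\Delta G_{t-s}\big)*v(s)\,ds$. The first term is handled as above with the time-exponent $\frac12+\frac d4(\kappa-1)-\frac1\beta$, which is $>\gamma$ under the stated assumptions, giving a bound $C|t'-t|^\gamma\|v\|_{L^\beta([0,T],L^\rho)}$. For the second, writing $\Delta G_{r'}-\Delta G_r=\int_r^{r'}\partial_\tau\Delta G_\tau\,d\tau$ and using $\|\partial_\tau\Delta G_\tau\|_m=\|\Delta^3G_\tau\|_m\le C\tau^{-\frac32-\frac d4(1-\frac1m)}$, one interpolates with the bound on $\|\Delta G_r\|_m$ to get $\|\Delta G_{r'}-\Delta G_r\|_m\le C|r'-r|^\gamma r^{\frac d4(\kappa-1)-\frac12-\gamma}$; Young's and Hölder's inequalities then give $C|t'-t|^\gamma\|v\|_{L^\beta([0,T],L^\rho)}$, the hypothesis $\beta>(\frac12+\frac d4(\kappa-1)-\gamma)^{-1}$ being again exactly the integrability requirement. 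The main obstacle throughout is the input on the biharmonic Neumann kernel: establishing (or carefully invoking) the $r^{-1/4}$-scale decay of $G_r$ and its derivatives under the Neumann boundary reflection, and then bookkeeping the many dimension-dependent exponent constraints so that every time-integral that occurs converges; granting these, the remaining estimates are routine applications of Young's and Hölder's inequalities.
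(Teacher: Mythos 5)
The paper offers no proof of this lemma at all: it records these Green-function estimates as known facts and refers the reader to \cite{BJW,CW}, where they are established. Your proposal therefore does strictly more than the paper. For parts 1 and 2 your spectral argument is self-contained and correct: Parseval in $y$ together with the uniform sup-bound on the cosine eigenfunctions reduces \eqref{lem1} to $\sum_{k}e^{-2t|k|^4}\le Ct^{-d/4}$, which follows by integral comparison, and the elementary inequality $1-e^{-u}\le u^{\gamma}$ converts \eqref{lem2} into the convergence of $\sum_{k\neq 0}|k|^{4\gamma-4}$, which holds precisely when $\gamma<1-\tfrac d4$; this is arguably cleaner than the route in the cited references, which goes through pointwise kernel bounds. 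For part 3 your scheme (generalized Young's inequality in space with $1/m=\kappa$, H\"older in time, and the increment estimate via $\Delta G_{r'}-\Delta G_r=-\int_r^{r'}\Delta^3G_\tau\,d\tau$ interpolated against the single-time bound) is the standard and correct one, and your exponent bookkeeping — including the observation that the side conditions $1/\kappa<3$ (for $d=3$) and $1/\kappa\neq\infty$ (for $d=2$) are exactly positivity of $\tfrac12+\tfrac d4(\kappa-1)$, and that each hypothesis on $\beta$ is exactly the integrability condition for the corresponding time integral — checks out.

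The one soft spot is the input $\sup_y\|\Delta^jG_r(\cdot,y)\|_{L^m(\DD)}\le Cr^{-\frac j2-\frac d4(1-\frac1m)}$. For $m=2$ this is again Parseval, but for the values of $m$ you actually need (determined by $1/m=\kappa$, often near $1$) the spectral sum does not yield it directly: absolute summation of $\sum_k|k|^{2j}e^{-r|k|^4}\|\varphi_k\|_{L^m}|\varphi_k(y)|$ loses the cancellation needed even for the $L^1$ bound $\|G_r(\cdot,y)\|_1\le C$. The honest source is the pointwise estimate $|\Delta^jG_t(x,y)|\le Ct^{-(d+2j)/4}\exp\bigl(-c(|x-y|t^{-1/4})^{4/3}\bigr)$ obtained by the method of images from the whole-space biharmonic kernel, which is itself a nontrivial oscillatory-integral bound. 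Since you explicitly offer to quote these bounds from \cite{BJW,CW} — which is all the paper itself does for the entire lemma — this is acceptable, but the phrase ``derivable from the spectral sum by comparison with the whole-space kernel'' undersells the work hidden there.
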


Next we give an estimate about  the convergence of $u^\e$ as $\e\to 0$.

For any $q\geq p$, $t\in[0,T]$ and $M>0$, let \beq\label{Atm}\AA^\e(t,M):=\left\{ \omega\in\Omega;\ \sup_{s\in[0,t]}\left\|u^\e(s,\cdot)\right\|_p^q\leq M\right\}.\nneq

\begin{proposition}\label{Prop 1}
{\rm
Under ${\bf(H)}$,   for any $q\geq p$,
there exists some positive constant $C$ such that for any $M>0$,
\begin{equation}\label{eq prop 1}
\sup_{t\in[0,T]}\ee\left[I_{\AA^\e(t,M)}\left\|u^\e(t,\cdot)-u^0(t,\cdot)\right\|_p^q\right]\leq C\e^{\frac{q}{2}}\rightarrow 0, \quad \text{as} \ \e\rightarrow 0.
\end{equation}
}
\end{proposition}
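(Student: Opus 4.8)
The plan is to estimate the $L^p$-difference directly from the mild formulations \eqref{SPDE solution} and \eqref{eq u0}. Subtracting, we obtain
$$
u^\e(t,x)-u^0(t,x)=\int_0^t\int_\DD \Delta G_{t-s}(x,y)\big(f(u^\e(s,y))-f(u^0(s,y))\big)\,dy\,ds+\sqrt\e\int_0^t\int_\DD G_{t-s}(x,y)\sigma(u^\e(s,y))\,W(ds,dy),
$$
so that on the event $\AA^\e(t,M)$ we must control the $L^p$-norm of each term raised to the power $q$. For the stochastic integral, I would apply the Burkholder-Davis-Gundy inequality in the Walsh framework together with the factorization/Minkowski argument used in \cite{CW}, bounding it by $C\e^{q/2}$ times an integral of $\ee[\|\sigma(u^\e(s,\cdot))\|_p^q]$; since $\sigma$ is bounded by ({\bf H.1}), this contributes $C\e^{q/2}$ using Lemma~\ref{lemma1}(1) and \eqref{lem2}. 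For the drift term, ({\bf H.2}) gives $|f(a)-f(b)|\le C(1+|a|^2+|b|^2)|a-b|$, and after applying H\"older's inequality in space (splitting the exponents so the quadratic factor lands in a high enough $L^r$) together with the operator bound \eqref{lem32} for $J$, the key point is that on $\AA^\e(t,M)$ the factor $1+\|u^\e\|^2+\|u^0\|^2$ is bounded by a constant depending on $M$ (using also \eqref{eq u 0 estimate}). This turns the drift contribution into $C(M)\int_0^t (t-s)^{-\theta}\ee[I_{\AA^\e(s,M)}\|u^\e(s,\cdot)-u^0(s,\cdot)\|_p^q]\,ds$ for some $\theta<1$, after noting $\AA^\e(t,M)\subset\AA^\e(s,M)$ for $s\le t$.

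Combining the two estimates, if I set $\phi(t):=\sup_{r\in[0,t]}\ee\big[I_{\AA^\e(r,M)}\|u^\e(r,\cdot)-u^0(r,\cdot)\|_p^q\big]$, I arrive at an inequality of the form
$$
\phi(t)\le C\e^{q/2}+C(M)\int_0^t (t-s)^{-\theta}\phi(s)\,ds,
$$
and the singular Gronwall lemma (Gronwall-Henry inequality) yields $\phi(T)\le C(M,T)\e^{q/2}$, which is exactly \eqref{eq prop 1}; letting $\e\to0$ finishes the proof.

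The main obstacle I anticipate is handling the cubic nonlinearity $f$ in the $L^p$ setting: because $\Delta f(u^\e)$ is only controlled through the smoothing operator $J$, one must carefully choose the auxiliary exponents $\rho,\beta,p$ so that the hypotheses of Lemma~\ref{lemma1}(3) are met (in particular the dimensional restriction $\tfrac1\kappa<3$ when $d=3$), while still keeping the time-exponent $\tfrac12+\tfrac d4(\kappa-1)-\tfrac1\beta$ positive so the convolution is integrable. This is where the cutoff $\AA^\e(t,M)$ is essential — without it the quadratic growth factor $1+\|u^\e\|^2+\|u^0\|^2$ cannot be pulled out, and one would be forced to close a nonlinear (rather than linear) Gronwall estimate. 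A secondary technical point is that the moment bounds \eqref{eq u e estimate} are needed only to guarantee the right-hand side is finite a priori (so the Gronwall argument is legitimate), not to produce the rate $\e^{q/2}$, which comes purely from the explicit $\sqrt\e$ in front of the noise.
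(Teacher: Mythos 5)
Your proposal is correct and follows essentially the same route as the paper: the same decomposition into stochastic and drift terms, BDG with the boundedness of $\sigma$ for the rate $\e^{q/2}$, the operator bound of Lemma~\ref{lemma1}(3) with $\rho=p/3$ and the cutoff $\AA^\e(t,M)$ to linearize the cubic nonlinearity, and Gronwall to close. The only cosmetic difference is that the paper applies \eqref{lem32} followed by H\"older in time to obtain a non-singular kernel, so the ordinary Gronwall lemma suffices rather than the Gronwall--Henry version you invoke.
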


\begin{proof} Since for any $ t\in[0,T]$,
\begin{align*}
 u^{\e}(t,x)-u^0(t,x)&=\sqrt{\e}\int_0^t\int_{\DD} G_{t-s}(x,y)\sigma (u^\e(s,y))W(ds,dy)\notag\\
                   &\ \ \ +\int_0^t\int_{\DD}\Delta G_{t-s}(x,y)\left[f(u^\e(s,y))-f(u^0(s,y))\right]dyds\notag\\
                   &=:T^\e_1(t,x)+T^\e_2(t,x),
\end{align*}
we obtain that for any $q\geq p$,
\begin{equation}\label{T}
\left\|u^\e(t,\cdot)-u^0(t,\cdot)\right\|_p^q\leq C\left(\left\|T^\e_1(t,\cdot)\right\|_p^q +\left\|T^\e_2(t,\cdot)\right\|_p^q\right),
\end{equation}
for some positive constant $C$.

For $T^\e_1(t,x)$, firstly, it follows from H\"older's inequality that
$$
\ee\left[\left\|T_1^\e(t,\cdot)\right\|_p^q\right]=\ee\left(\int_{\DD}\left|T_1^\e(t,x)\right|^p dx\right)^{\frac{q}{p}}
\le C\int_{\DD}\ee\left[\left|T_1^\e(t,x)\right|^q\right] dx.
$$

Then BDG inequality, the boundedness of $\sigma$ and Eq.\eqref{lem2} imply that
\begin{align}
\ee\left[\left|T_1^\e(t,x)\right|^q\right]\notag
\leq &C \e^{\frac{q}{2}}\ee\left[\left|\int_0^t\int_{\DD} G^2_{t-s}(x,y)\sigma^2\left(u^\e(s,y)\right)dyds\right|^{\frac{q}{2}}\right]\notag \\
\leq & C\e^{\frac{q}{2}}\left|\int_0^t\int_{\DD} G^2_{t-s}(x,y)dyds\right|^{\frac{q}{2}}\notag\\
\leq &C(T)\e^{\frac{q}{2}}.
\end{align}
Therefor, we have
\beq\label{T1}
\ee\left[\left\|T_1^\e(t,\cdot)\right\|_p^q\right]\leq C(T)\e^{\frac{q}{2}}.
\nneq

By Eq.\eqref{lem32} in Lemma \ref{lemma1}, for any $\rho\in [1,p]$, $\kappa=1/p-1/\rho+1 \in [0,1]$ and $\beta \in\left(\frac{1}{\frac12+\frac{d}{4}(\kappa-1)},q\right]$, we obtain that
\begin{align}\label{T20}
&\ee \left[I_{\AA^\e(t,M)}\left(\int_{\DD}|T_2^\e(t,x)|^p dx\right)^{\frac{q}{p}}\right]\notag\\
\leq& C(T)\ee\left[I_{\AA^\e(t,M)}\left(\int_0^t\left\|f(u^\e(s,\cdot))- f(u^0(s,\cdot))\right\|^\beta_\rho ds\right)^{\frac{q}{\beta}}\right] \notag\\
\leq& C(T)\ee\left[I_{\AA^\e(t,M)}\int_0^t\left\|f(u^\e(s,\cdot))- f(u^0(s,\cdot))\right\|_\rho^qds\right] \notag\\
\leq& C(T)\ee\left[\int_0^t I_{\AA^\e(s,M)} \left\|f(u^\e(s,\cdot))-f(u^0(s,\cdot))\right\|_\rho^q ds\right],
\end{align}
where the last inequality is by  the fact that $\AA^\e(t,M)\subset\AA^\e(s,M)$ for any $0\le s\le t$.

Since $f$ is a polynomial function of degree $3$,  taking $\rho =\frac {p}{3}$, by  H\"older's inequality,  we have
\begin{equation}\label{f}
\left\|f(u^\e(s,\cdot))-f(u^0(s,\cdot))\right\|_\rho^q\leq C\left\|u^\e(s,\cdot)-u^0(s,\cdot)\right\|_p^q\left(1+\left\|u^\e(s,\cdot)\right\|_p^{2q}+\left\|u^0(s,\cdot)\right\|_p^{2q}\right).
\end{equation}
Then the definition of $\AA^\e(t,M)$  and estimation \eqref{eq u 0 estimate} yield that
\beq\label{T2}
\ee\left[I_{\AA^\e(t,M)}\left\|T_2^\e(t,\cdot)\right\|_p^q\right]\leq C(T,M)\int_0^t\ee\left[I_{\AA^\e(s,M)}\left\|u^\e(s,\cdot)-u^0(s,\cdot)\right\|_p^q\right]ds.
\nneq
Combining (\ref{T1}) with (\ref{T2}), we have
$$
\ee\left[I_{\AA^\e(t,M)}\left\|u^\e(t,\cdot)-u^0(t,\cdot)\right\|_p^q\right]
\leq C(T) \e^{\frac{q}{2}}+C(T,M)\int_0^t\ee\left[I_{\AA^\e(s,M)}\left\|u^\e(s,\cdot)-u^0(s,\cdot)\right\|_p^q\right]ds.
$$
By Gronwall's inequality, we  obtain the desired  estimate.

The proof is complete.
\end{proof}

\subsection{Proof of Theorem \ref{CLT}}
\begin{proof}[Proof of Theorem \ref{CLT}]
Denote $Y^\e:=(u^\e-u^0)/\sqrt{\e}$.  We will prove that
\begin{equation}\label{CLT pro}
\left\|Y^\e-Y\right\|_{\alpha,p}^q \longrightarrow 0\ \text{ in probability as } \e\longrightarrow 0,
\end{equation}
with $\alpha\leq \frac{\gamma}{4}$ and $\alpha < \frac{1}{2}(1-\frac{d}{4})$ and  $q\geq p$.

By \eqref{eq u e estimate}, we have $P(\AA^\e(T, M)^c)\to 0 $ as $\e\to0$ and $M\to \infty$, thus we only need to show that

\begin{equation}\label{CLT p}
\lim_{\e\to 0}\left\|Y^\e-Y\right\|_{\alpha,p}^q=0 \ \text {  in probability on }\AA^\e(T, M).
\end{equation}
The task of the following part  is to prove that
\begin{equation}\label{CLT exp}
\lim_{\e\to 0}\ee\left[I_{\AA^\e(T, M)}\left\|Y^\e-Y\right\|_{\alpha,p}^q\right]=0,
\end{equation}
 which is stronger than \eqref{CLT p}.
Denote $$V^\e:=Y^\e-Y.$$
By Lemma A.1 in \cite{CWM},  to prove Eq.\eqref{CLT exp}, we only need to verify the following two conditions for $V^\e$:

\begin{itemize}
  \item[(A1).] for all $t\in[0,T]$,
  $$\lim_{\e\rightarrow 0}\ee\left[I_{\AA^\e(t, M)}\left\|V^{\e}(t,\cdot)\right\|_p^q\right]=0;$$
  \item[(A2).] there exists $\gamma>0$,  such that for all $t',t \in[0,T]$,
$$
\ee\left[I_{\AA^\e(T, M)}\left\|V^\e(t',\cdot)-V^\e(t,\cdot)\right\|^q_p\right]\leq C|t'-t|^{\gamma q},
$$
where $C$ is a positive constant independent of $\e$.
\end{itemize}


\noindent{\bf Step 1.} First, we prove (A1) for $V^\e(t,x)$.
Notice that
\begin{align*}
 &Y^\e(t,x)-Y(t,x)\notag\\
 =&\int_0^t\int_{\DD}G_{t-s}(x,y)\left[\sigma(u^\e(s,y))-\sigma(u^0(s,y))\right]W(ds,dy)\notag\\
  &+\int_0^t\int_{\DD}\Delta G_{t-s}(x,y)\left(\frac{f(u^\e(s,y))-f(u^0(s,y))}{\sqrt \e}-f'(u^0(s,y))Y^\e(s,y)\right)dyds\notag\\
  &+\int_0^t\int_{\DD}\Delta G_{t-s}(x,y)f'(u^0(s,y))(Y^\e(s,y)-Y(s,y))dyds\notag\\
 =:&I_1^\e(t,x)+I_2^\e(t,x)+I_3^\e(t,x),
 \end{align*}
thus
\begin{align}\label{I}
\ee\left[I_{\AA^\e(t, M)}\left\|V^{\e}(t,\cdot)\right\|_p^q\right]\leq C\sum_{i=1}^3\ee\left[I_{\AA^\e(t, M)}\left\|I_i^{\e}(t,\cdot)\right\|_p^q\right].
\end{align}

 By H\"{o}lder's inequality, BDG's inequality and the Lipschitz continuity of $\sigma$,  we  can deduce that
\begin{align}\label{I11}
 &\ee\left[I_{\AA^\e(t,M)}\|I_1^\e(t,\cdot)\|^q_p\right]\notag\\
\le& C \ee\left[I_{\AA^\e(t,M)}\int_\DD |I_1^\e(t,\cdot)|^qdx\right]\notag\\
=& C \ee\left[I_{\AA^\e(t,M)}\int_\DD \left|\int_0^t\int_\DD G_{t-s}(x,y)\left[\sigma(u^\e(s,y))-\sigma(u^0(s,y))\right]W(ds,dy) \right|^q dx\right]\notag\\
\le& C\int_\DD\ee\left[ I_{\AA^\e(t,M)}\left(\int_0^t\int_\DD G^2_{t-s}(x,y)\left|u^\e(s,y)-u^0(s,y)\right|^2 dyds\right)^{\frac q2}\right]dx.
\end{align}
By H\"{o}lder's inequality, for some $0<r<1$, the above inequality implies that
\begin{align}\label{I12}
 &\ee\left[I_{\AA^\e(t,M)}\|I_1^\e(t,\cdot)\|^q_p\right]\notag\\
\leq& C\int_\DD \ee\left[ I_{\AA^\e(t,M)}\left(\int_0^t\int_\DD G^2_{t-s}(x,y)dyds\right)^{\frac {qr}{2}}\left(\int_0^t\int_\DD G^2_{t-s}(x,y)\left|u^\e(s,y)-u^0(s,y)\right|^{\frac{2}{1-r}}dyds \right)^{\frac {q(1-r)}{2}}\right]dx\notag\\
\le&  C(T)\int_\DD \ee \left[I_{\AA^\e(t,M)}\left(\int_0^t\int_\DD G^2_{t-s}(x,y)\left|u^\e(s,y)-u^0(s,y)\right|^{\frac{2}{1-r}}dyds \right)^{\frac {q(1-r)}{2}}\right]dx,
\end{align}
where  Eq.\eqref{lem2} is used in the last inequality.

Choosing $1-r=\frac 2q$ for some $q>2$, by Eq.\eqref{lem1} in Lemma \ref{lemma1} and H\"{o}lder's inequality,
\begin{align}\label{I13}
 &\ee\left[I_{\AA^\e(t,M)}\|I_1^\e(t,\cdot)\|^q_p\right]\notag\\
\le&C(T)\ee\left[ I_{\AA^\e(t,M)}\int_0^t\int_\DD \left(\int_\DD G^2_{t-s}(x,y)dx\right) \left|u^\e(s,y)-u^0(s,y)\right|^q dyds\right]\notag\\
\le&C(T)\ee\left[ I_{\AA^\e(t,M)}\int_0^t (t-s)^{-\frac d4} \left\|u^\e(s,\cdot)-u^0(s,\cdot)\right\|_q^q ds\right]\notag\\
\le&C(T)\ee\left[ \left(\int_0^t (t-s)^{-\frac {5d}{16}}ds\right)^{\frac 45} \left(\int_0^tI_{\AA^\e(s,M)}\left\|u^\e(s,\cdot)-u^0(s,\cdot)\right\|_q^{5q} ds\right)^{\frac 15}\right]\notag\\
\le&C(T)\ee\left[\left(\int_0^t I_{\AA^\e(s,M)}\left\|u^\e(s,\cdot)-u^0(s,\cdot)\right\|_q^{5q}ds\right)^{\frac 15}\right].
\end{align}
Taking $p=q$, Proposition \ref{Prop 1} yields that
\begin{equation}\label{I1}
 \ee\left[I_{\AA^\e(t,M)}\|I_1^\e(t,\cdot)\|^q_p\right]\le C(T)\e^{\frac q 2}.
\end{equation}

Notice that $u^{\e}=u^0+\sqrt\e Y^{\e}$. By the mean theorem for derivatives,  there exists a random field $\xi^\e(t,x)$ taking values in $(0,1)$ such that
\begin{align*}
\frac{f(u^\e)-f(u^0)}{\sqrt\e}=f'\big(u^0+\sqrt\e \xi^\e Y^{\e} \big)Y^\e.
\end{align*}
By  (H.2), we have
\begin{align}\label{eq b1}
\left|\frac{f(u^\e)-f(u^0)}{\sqrt\e}-f'(u^0)Y^{\e}\right|
&=\left|\left[f'(u^0+\sqrt\e \xi^\e Y^{\e} )-f'(u^0)\right] Y^{\e}\right|\notag\\
&\le C\sqrt{\e}\left(2|u^0|+\sqrt\e|Y^\e|+1\right)|Y^{\e}|^2.
\end{align}
By the same calculation as that in  Eq.\eqref{T20}, we have for any $1\le\rho\le p$
\begin{align}
&\ee\left[I_{\AA^\e(t,M)}\|I_2^\e(t,\cdot)\|^q_p\right]\notag\\
\le& C(T)\ee\left[\int_0^tI_{\AA^\e(s,M)}\left\|\frac{f(u^\e(s,\cdot))-f(u^0(s,\cdot))}{\sqrt \e}-f'(u^0(s,\cdot))\right\|^q_\rho ds\right]\notag\\
\leq& \e^{\frac {q}{2}}C(T)\ee\left[\int_0^tI_{\AA^\e(s,M)}\left\|\left(2|u^0(s,\cdot)|+\sqrt\e|Y^\e(s,\cdot)|+1\right)|Y^{\e}(s,\cdot)|^2\right\|^q_\rho ds\right].
\end{align}
Therefor, H\"{o}lder's inequality, estimation \eqref{eq u 0 estimate} and Proposition \ref{Prop 1} jointly yield that
\begin{equation}\label{I2}
\ee\left[I_{\AA^\e(t,M)}\left\|I_2^\e(t,\cdot)\right\|^q_p\right]\le \e^{\frac {q}{2}} C(T,M).
\end{equation}


Similarly, we have for any $1\le\rho\le p$
\begin{align}\label{I3}
\ee\left[I_{\AA^\e(t,M)}\left\|I_3^\e(t,\cdot)\right\|^q_p\right]
&\leq C(T)\ee \left[ \int_0^t I_{\AA^\e(s,M)}\left\|f'(u^0(s,\cdot))V^\e(s,\cdot)\right\|_\rho^qds\right]\notag\\
&\leq C(T)\int_0^t\ee\left[I_{\AA^\e(s,M)}\left\|V^\e(s,\cdot)\right\|_p^q \right]ds.
\end{align}
Putting \eqref{I}, \eqref{I1}, \eqref{I2} and \eqref{I3} together, we have
$$
\ee\left[I_{\AA^\e(t, M)}\left\|V^{\e}(t,\cdot)\right\|_p^q\right]
\le C(T,M)\left(\e^{\frac{q}{2}}+ \int_0^t\ee\left[I_{\AA^\e(s, M)}\left\|V^{\e}(s,\cdot)\right\|_p^q\right]ds\right).
$$
By Gronwall's inequality, we have
\begin{equation}\label{bound V}
\ee\left[I_{\AA^\e(t, M)}\left\|V^{\e}(t,\cdot)\right\|_p^q\right]\le C(T,M)\e^{\frac{q}{2}}\rightarrow 0,\quad \text{as}\ \e\rightarrow 0,
\end{equation}
which implies  (A1).\\

\noindent{\bf Step 2.}
 Notice that for any   $0\le t<t'\le T$ and   $x\in\DD$,
 $$
  V^\e(t',x)- V^\e(t,x)= \left(Y^\e(t',x)-Y^\e(t,x)\right)-\left(Y(t',x)-Y(t',x)\right).
 $$
 To verify (A2) for $V^\e(t',\cdot)- V^\e(t,\cdot)$, it is sufficient to prove that for $Y^\e(t',\cdot)- Y^\e(t,\cdot)$ and $Y(t',\cdot)- Y(t,\cdot)$, respectively.  Since the H\"older regularity of $Y(t',\cdot)- Y(t,\cdot)$ is given in Eq.\eqref{holder-Y}, we only need to  give the proof for $Y^\e(t',\cdot)- Y^\e(t,\cdot)$,  that is  there exists a constant $\gamma>0$ such that, for all $t,t'\in[0,T]$,
 \beq\label{Ye holder}
 \ee\left[I_{\AA^\e(t, M)}\left\|Y^\e(t',\cdot)-Y^\e(t,\cdot)\right\|_p^q\right]\le C|t'-t|^{\gamma q}.
 \nneq
 Since for any $t\in[0,T]$, $\frac{t}{T}\in[0,1]$, we only need to prove the existence of $\gamma$ for any $t,t'\in[0,1]$.

 Notice that
\begin{align}\label{eq Y e}
Y^{\e}(t,x)&=\int_0^t\int_{\DD}G_{t-s}(x,y)\sigma(u^\e(s,y))W(ds,dy)\notag\\
           &\quad +\int_0^t\int_{\DD}\Delta G_{t-s}(x,y)\frac{f(u^\e(s,y))-f(u^0(s,y))}{\sqrt \e}dyds\notag\\
           &=:J_1^\e(t,x)+J_2^\e(t,x).
\end{align}
By the proof of Theorem 1.4 in \cite{CW}, there exists a constant $\gamma_1\in\left(0, \frac 12(1-\frac d4)\right)$ such that
\beq\label{J1}
\ee\left[\left\|J_1^\e(t',\cdot)-J_1^\e(t,\cdot)\right\|_p^q\right]\le  C|t'-t|^{\gamma_1 q}.
\nneq
For the second term,
by Eq.\eqref{lem33}  and H\"{o}lder inequality, for any $\rho\in [1,p]$, $\kappa=1/q-1/p+1 \in [0,1]$, there exist $\gamma_2 \in\left(0, \frac 12+\frac d4(\kappa-1)\right)$ and $\beta\in \left(\frac{1}{\frac 12+\frac d4(\kappa-1)-\gamma_2}, q\right)$,  such that
\begin{align}\label{J2}
&\ee\left[I_{\AA^\e(t, M)}\left\|J_{2}^\e(t',\cdot)-J_{2}^\e(t, \cdot)\right\|_p^q\right]\notag\\
\le & C |t'-t|^{\gamma_2 q} \ee\left[I_{\AA^\e(t, M)}\left(\int_0^t\left\|\frac{f(u^\e(s,\cdot))-f(u^0(s,\cdot))}{\sqrt \e}\right\|_\rho^\beta ds\right)^{q/\beta}\right]\notag\\
\le & C |t'-t|^{\gamma_2 q} \ee\left[\int_0^t I_{\AA^\e(s, M)}\left\|\frac{f(u^\e(s,\cdot))-f(u^0(s,\cdot))}{\sqrt \e}\right\|_\rho^q ds\right]\notag\\
\le & C(T, M) |t'-t|^{\gamma_2 q},
\end{align}
where the last inequality is due to  the same calculation as that of Eq.\eqref{f}.

Combining \eqref{eq Y e}-\eqref{J2},  choosing $\gamma=\gamma_1\wedge\gamma_2$, we get \eqref{Ye holder}.

The proof is complete.
\end{proof}

\section{Moderate Deviation Principle}


For any   $N \in \nn$, let
$$
\HH_N :=\left\{ v\in L^2([0,T]\times \DD);\ \|v\|:=\int_0^T\int_{\DD}v^2(t,x)dxdt\le N\right\},
$$
which is a compact metric space of $L^2([0,T]\times \DD)$ in the weak topology.
Denote $\GG$  the set of predictable processes belonging to $ L^2([0,T]\times \DD)$ a.s., and
$$
\GG_N :=\{ v\in \GG;\ v(\omega)\in \HH_N,\ \pp\text{-a.s.}\}.
$$
For any $\e\in(0,1]$ and $v\in\GG_N$, consider the controlled equation $Z^{\e,v}$ defined by
\begin{align}\label{eq Z e}
&Z^{\e,v}(t,x)\notag\\
 =&\frac{1}{h(\e)}\int_0^t\int_{\DD}G_{t-s}(x,y)\sigma(u^0(s,y)+\sqrt\e h(\e)Z^{\e,v}(s,y))W(ds,dy)\notag\\
 &  +\int_0^t\int_{\DD}G_{t-s}(x,y)\sigma(u^0(s,y)+\sqrt\e h(\e)Z^{\e,v}(s,y))v(s,y)dyds\notag\\
 &  +\int_0^t\int_{\DD}\Delta G_{t-s}(x,y)\frac{f(u^0(s,y)+\sqrt{\e}h(\e)Z^{\e,v}(s,y))-f(u^0(s,y))}{\sqrt\e h(\e)}dyds.
\end{align}
Following  the proof of Theorem 3.2 in \cite{BM},   one can prove that Eq.\eqref{eq Z e} admits a unique solution satisfying \beq\label{eq z e estimate}
\sup_{\e\le 1}\sup_{v\in\GG_N}\sup_{t\in[0,T]}\ee\left[\left\|Z^{\e,v}(t,\cdot)\right\|_p^q\right]<+\infty,
\nneq
for $q\geq p$.

\vskip0.3cm

Consider the following conditions which correspond to the weak convergence approach in our setting:

\begin{itemize}
 \item[({\bf a})]For any family $\{v^\e;\ \e>0\}\subset\GG_N$ which converges in distribution as $\HH_N$-valued random elements to $v\in\GG_N$ as $\e\rightarrow 0$,
       $$
       \lim_{\e\rightarrow 0}Z^{\e,v^{\e}}=Z^v\quad \text{ in distribution},
       $$
       as $\EE_{\alpha}$-valued random variables, where $Z^v$ denotes the solution of Eq.\eqref{eq sk} corresponding to the $\HH_N$-valued random variable $v$ (instead of a deterministic function);
 \item[({\bf b})]The set $\{Z^v;\ v\in \HH_N\}$ is a compact set of $\EE_{\alpha}$, where $Z^v$ is the solution of Eq.\eqref{eq sk}.
\end{itemize}

\vskip0.3cm

\bprf[The proof of Theorem \ref{MDP}]

According to \cite[Theorem 6]{BDM}, we need to prove that Condition  ({\bf a}), ({\bf b}) are fulfilled. Firstly, we verify Condition ({\bf a}).

By the Skorokhod representation theorem, there exist a probability space $(\bar\Omega,\bar\FF,(\bar\FF_t)_{t\geq 0},\bar\pp)$, and, on this basis, a sequence of independent Brownian motions $\bar W$ and also a family of $\bar \FF_t$-predictable processes $\{\bar v^\e;\ \e>0\}$, $\bar v$  taking values in $\HH_N$, $\bar\pp$-a.s., such that the joint law of $(v^\e,v, W)$ under
$\pp$ coincides with that of  $(\bar v^\e,\bar v, \bar W)$ under $\bar\pp$ and
\beq\label{eq conver weak}
\lim_{\e\rightarrow0}\int_0^T\int_{\DD}(\bar v^\e(s,y)-\bar v(s,y))g(s,y)dyds=0, \quad\forall g\in L^2([0,T]\times\DD),\ \bar \pp\mbox{-a.s.}.  \ \
\nneq
Let $\bar Z^{\e,\bar v^\e}$ be the solution of a similar equation to \eqref{eq Z e} replacing $v$  by $\bar v^\e$ and $W$ by $\bar W$, and
let $\bar Z^{\bar v}$ be the solution of Eq.\eqref{eq sk} corresponding to the $\HH_N$-valued random variable $\bar v$.

Now, we shall prove that for any $q\geq p$
and  $\alpha\leq \frac{\gamma}{4}, \alpha < \frac{1}{2}(1-\frac{d}{4})$,
\beq\label{eq b conv}
\lim_{\e\rightarrow0}\bar \ee\left[ \left\|  \bar Z^{ \e, \bar v^\e}- \bar Z^{\bar v} \right\|_{\alpha,p}^q\right]=0,
\nneq
which implies the validity of Condition ({\bf a}). Here the expectation in \eqref{eq b conv} refers to the probability $\bar \pp$.

From now on, we drop the bars in the notation for the sake of simplicity, and we denote
$$
X^{\e,v^\e,v}:=Z^{\e,v^\e}-Z^v.
$$

By Lemma A1 in \cite{CM}, in order to prove \eqref{eq b conv}, it is sufficient to prove that:

\begin{itemize}
  \item[(1)]For all $t\in[0,T]$,
   \beq\label{c1}
   \lim_{\e\to 0}\ee\left[\left\|X^{\e,v^\e,v}(t,\cdot)\right\|_p^q\right] =0.
   \nneq
  \vskip0.2cm

  \item[(2)]There exists a positive constant $C$ and $\gamma>0$ such that: for all $t,t'\in[0,t]$
  \beq\label{c2}
  \sup_{\e\in(0,1]}\ee \left[\left\|X^{\e,v^\e,v} (t',\cdot)-X^{\e,v^\e,v} (t,\cdot)\right\|^q_p\right]\le C|t-t'|^{\gamma q}.
  \nneq
\end{itemize}

According  to  the proof of Theorem 4.1 in \cite{BM}, one can get the H\"{o}lder regularities for   $Z^v(t,\cdot), Z^{\e,v^{\e}}(t,\cdot)$,  that is for some  $\gamma>0 $,
$$
\ee\left[\left\|Z^v (t',\cdot)-Z^{v} (t,\cdot)\right\|^q_p\right]\le C|t-t'|^{\gamma q},
$$
and
$$
  \sup_{\e\in(0,1]}\ee \left[\left\|Z^{\e,v^\e} (t',\cdot)-Z^{\e,v^\e} (t,\cdot)\right\|^q_p\right]\le C|t-t'|^{\gamma q}.
  $$
Thus we get \eqref{c2}. Next we prove \eqref{c1}.

 Notice that for any $(t,x)\in[0,T]\times \DD$,
\begin{align}\label{eq A0}
&Z^{\e,v^\e}(t,x)-Z^{v}(t,x)\notag\\
=&\frac{1}{h(\e)}\int_0^t\int_{\DD} G_{t-s}(x,y)\sigma(u^0(s,y)+\sqrt\e h(\e)Z^{\e,v^\e}(s,y))W(ds,dy)\notag\\
&+\Bigg\{\int_0^t\int_{\DD}G_{t-s}(x,y)\sigma(u^0(s,y)+\sqrt\e h(\e)Z^{\e,v^\e}(s,y))v^\e(s,y)dyds\notag\\
&\ \ \ \ \ \  -\int_0^t\int_{\DD}G_{t-s}(x,y)\sigma(u^0(s,y))v(s,y)dyds\notag\Bigg\}\\
&+\Bigg\{\int_0^t\int_{\DD}\Delta G_{t-s}(x,y)\frac{f(u^0(s,y)+\sqrt\e h(\e)Z^{\e,v^\e}(s,y)) -f(u^0(s,y))}{\sqrt\e h(\e)}dyds\notag\\
&\ \ \ \ \ \  -\int_0^t\int_{\DD}\Delta G_{t-s}(x,y)f'(u^0(s,y))Z^v(s,y)dyds\Bigg\}\notag\\
=:&A_1^\e(t,x)+A_2^\e(t,x)+A_3^\e(t,x).
\end{align}
{\bf Step 1}.  For the first term $A_1^\e(t,x)$, following the same calculation  as that for $\ee\left[\left\|T_1^{\e}(t,\cdot)\right\|^q_p\right]$ in the proof of Proposition \ref{Prop 1}, we have that
\begin{equation}\label{A1}
\ee\left[\left\|A_1^{\e}(t,\cdot)\right\|^q_p\right]\le \frac{C(T)}{h(\e)^q},
\end{equation}
for some positive constant $C(T)$.

{\bf Step 2}.  For the  second term, we  divide it  into two terms:
\begin{align}\label{A2}
&A_2^{\e}(t,x)\notag\\
=& \int_0^t\int_{\DD}G_{t-s}(x,y)\left[\sigma(u^0(s,y)+\sqrt\e h(\e)Z^{\e,v^\e}(s,y))-\sigma(u^0(s,y))\right]v^\e(s,y)dyds\notag\\
&  +\int_0^t\int_{\DD}G_{t-s}(x,y)\sigma(u^0(s,y))[v^\e(s,y)-v(s,y)]dsdy\notag\\
=:&A_{2,1}^\e(t,x)+A_{2,2}^\e(t,x).
\end{align}
By Cauchy-Schwarz inequality and the Lipschitz continuity of $\sigma$, we obtain that
\begin{align}\label{A211}
\ee\left[\left\|A_{2,1}^\e(t,\cdot)\right\|^q_p\right]
\leq &C(\sqrt\e h(\e))^q\ee\left[\|v^\e\|^{\frac q2} \cdot\left\|\left(\int_0^t\int_{\DD} G^2_{t-s}(\cdot,y)\left|Z^{\e,v^\e}(s,y)\right|^2dyds\right)^{\frac 12}\right\|_p^q\right]\notag\\
\leq &CN^{\frac q2}(\sqrt\e h(\e))^q\ee\left[\left\|\left(\int_0^t\int_{\DD} G^2_{t-s}(\cdot,y)\left|Z^{\e,v^\e}(s,y)\right|^2dyds\right)^{\frac 12}\right\|_p^q\right].
\end{align}

 H\"older's inequality, Eq.\eqref{lem1} and Eq.\eqref{lem2} imply that
\begin{align}\label{A212}
&\ee\left[\left\|\left(\int_0^t\int_{\DD} G^2_{t-s}(\cdot,y)\left|Z^{\e,v^\e}(s,y)\right|^2dyds\right)^{\frac 12}\right\|_p^q\right]\notag\\
 \leq &\ee\left[\int_{\DD}\bigg(\int_0^t\int_{\DD} G^2_{t-s}(x,y)dyds\bigg)^{\frac{p}{2} -1}\cdot\left(\int_0^t\int_{\DD} G^2_{t-s}(x,y)|Z^{\e,v^\e}(s,y)|^p dyds\right)dx\right]^{\frac qp}\notag\\
\leq&C(T)\ee \left[\int_0^t\left(\int_{\DD} G^2_{t-s}(x,y)dx\right) \cdot \left(\int_{\DD}|Z^{\e,v^\e}(s,y)|^p dy\right)ds\right]^{\frac qp}\notag\\
\leq&C(T)\ee\left(\int_0^t(t-s)^{-\frac d4}\left\|Z^{\e,v^\e}(s,\cdot)\right\|^p_p ds\right)^{\frac qp}\notag\\
\leq&C(T)\ee\left(\int_0^t(t-s)^{-\frac d4}\left\|Z^{\e,v^\e}(s,\cdot)\right\|^q_p ds\right).
\end{align}
Thus by H\"older's inequality and estimation  (\ref{eq z e estimate}), we get
\begin{equation}\label{A21}
\ee\left[\left\|A_{2,1}^\e(t,\cdot)\right\|^q_p\right] \leq C(T,N)(\sqrt\e h(\e))^q,
\end{equation}

Next,  we will show that
\beq\label{A22}
 \lim_{\e \rightarrow 0}\ee\left[\left\|A^{\e}_{2,2}(t,\cdot)\right\|^q_p\right]=0.
\nneq
For any $(t,x)\in[0,T]\times\DD$, by the boundedness of $\sigma$ and Eq.\eqref{lem2}, we have
$$
\int_0^T\int_\DD G^2_{t-s}(x,y)\sigma^2(u^0(s,y))dyds\leq C\int_0^T\int_\DD G_{t-s}^2(x,y)dyds< +\infty,
$$
which implies that for any $(t,x)\in[0,T]\times\DD$, the function $\big\{G_{t-s}(x,y)\sigma(u^0(s,y));\ (s,y)\in[0,T]\times\DD \big\}$ takes its values in $\HH_N$ for some $N\in\nn$. Since $v^\e\to v$ weakly in $\HH_N$, we obtain that:

   \beq\label{eq conv}
   \lim_{\e\rightarrow0}A^{\e}_{2,2}(t,x)=0,\  \ \text{a.s.}.
   \nneq
By Cauchy-Schwarz's inequality on  $\HH_N$ and the facts that $v^{\e},v\in \HH_N$ a.s., we  obtain that
\begin{align}\label{eq bound}
\left|A^{\e}_{2,2}(t,x)\right|
\leq &\left(\int_0^t\int_\DD G_{t-s}^2(x,y)\sigma^2\big(u^0(s,\cdot)\big)dyds\right)^{\frac12}\cdot\left(\int_0^t\int_\DD(v^{\e}(s,y)-v(s,y))^2dyds\right)^{\frac12}\notag\\
\leq &C(N)\left(\int_0^t\int_\DD G_{t-s}^2(x,y)\sigma^2\big(u^0(s,\cdot)\big)dyds\right)^{\frac12}\notag\\
                      \leq & C(N,T),
\end{align}
where $C(N, T)$ is independent of $(\e,t,x)$.  This implies that $A^{\e}_{2,2}(t,x)$ is  uniformly bounded a.s..
By the dominated convergence theorem,  we obtain \eqref{A22}.

{\bf Step 3}. For the third term,  it is also further divided into two terms
\begin{align}\label{A3}
&A_3^{\e}(t,x)\notag\\
= &\int_0^t\int_{\DD}\Delta G_{t-s}(x,y)\Bigg[\frac{f\big(u^0(s,y)+\sqrt{\e}h(\e)Z^{\e,v^{\e}}(s,y)\big)-f(u^0(s,y)
)}{\sqrt{\e}h(\e)}-f'(u^0(s,y))Z^{\e,v^{\e}}(s,y) \Bigg]dyds\notag\\
 &+\int_0^t\int_{\DD}\Delta G_{t-s}(x,y)f'(u^0(s,y))\big(Z^{\e,v^{\e}}(s,y)-Z^v(s,y)\big)
  dyds\notag\\
 =:& A^\e_{3,1}(t,x)+A^\e_{3,2}(t,x).
\end{align}
By the mean theorem for derivatives,  for each $\e \in(0,1]$, there exist  random fields $\xi^\e(t,x)$ and $\eta^\e(t,x)$ taking values in $(0,1)$ such that
\begin{align*}
&\frac{f\left(u^0+\sqrt{\e}h(\e)Z^{\e,v^{\e}}\right)-f(u^0)}{\sqrt{\e}h(\e)}-f'(u^0)Z^{\e,v^{\e}}\notag\\
=&f'\left(u^0+\sqrt\e h(\e) \xi^\e Z^{\e,v^{\e}}\right)Z^{\e,v^{\e}}-f'(u^0)Z^{\e,v^{\e}}\notag\\
=&\sqrt{\e}h(\e) \xi^\e f''(u^0+\sqrt\e h(\e)\eta^\e \xi^\e Z^{\e,v^{\e}})(Z^{\e,v^{\e}})^2.
\end{align*}
Thus by the same calculation as Eq.\eqref{T20}, we have for any $\rho \in[1,p]$,
\begin{align}\label{A31}
&\ee\left[\left\|A_{3,1}^{\e}(t,\cdot)\right\|_p^q\right]\notag\\
\leq& (\sqrt \e h(\e))^q\ee\left[\left\|\int_0^t\int_\DD\left|\Delta G_{t-s}(x,y)f''(u^0(s,y)+\sqrt\e h(\e) \eta^\e \xi^\e Z^{\e,v^{\e}}(s,y))(Z^{\e,v^{\e}}(s,y))^2\right|dyds\right\|_p^q\right]\notag\\
\leq& C(T)(\sqrt\e h(\e))^q\ee\left[\int_0^t\left\|f''\left(u^0(s,\cdot)+\sqrt\e h(\e) \eta^\e(s,\cdot) \xi^\e(s,\cdot) Z^{\e,v^{\e}}(s,\cdot)\right)|Z^{\e,v^{\e}}(s,\cdot)|^2\right\|^q_\rho ds\right]\notag\\
\leq& C(T)(\sqrt{\e}h(\e))^q,
\end{align}
where the last inequality is due to (H.2), H\"{o}lder's inequality, Eq.\eqref{eq u 0 estimate} and Eq.\eqref{eq z e estimate}.

 Similarly to $A_{3,1}^{\e}$,  one can prove that  for any $\rho \in[1,p]$
\begin{align}\label{A32}
\ee\left[\left\|A_{3,2}^{\e}(t,\cdot)\right\|_p^q\right]
\leq&C(T)\ee\left[\int_0^t\left\|f'(u^0(s,\cdot))(Z^{\e,v^{\e}}(s,\cdot)-Z^v(s,\cdot))\right\|^q_\rho ds\right]\notag\\
\leq &C(T)\int_0^t\ee\left[\left\|Z^{\e,v^{\e}}(s,\cdot)-Z^v(s,\cdot)\right\|^q_p\right]ds.
\end{align}

Putting \eqref{eq A0}-\eqref{A21},  \eqref{A3}-\eqref{A32} together, we have
\begin{align*}
\ee\left[\left\|X^{\e,v^{\e},v}(t, \cdot)\right\|^q_p\right]
 \le&C\Bigg(h^{-q}(\e)+(\sqrt\e h(\e))^q+\ee\left[\left\|A_{2,2}^\e(t, \cdot)\right\|_p^q\right]\notag\\
 &\ \ \ \ \ +(\sqrt\e h(\e))^q+\int_0^t\ee\left[\left\|X^{\e,v^{\e},v}(s, \cdot)\right\|_p^q\right]ds \Bigg).
\end{align*}
By   Gronwall's inequality and \eqref{A22}, we obtain that
\begin{equation*}
\ee\left[\left\|X^{\e,v^{\e},v}(t,\cdot)\right \|_p^q\right]
\leq C\left(h^{-q}(\e)+(\sqrt\e h(\e))^q+\ee\left[\left\|A_{2,2}^\e(t, \cdot)\right\|_p^q\right]\right)
\longrightarrow 0, \ \  \text{as } \e\rightarrow 0.
\end{equation*}
Thus \eqref{c1} is verified, which implies the Condition ({\bf a}).

\vskip0.3cm

Replacing  $\frac{1}{h(\e)}\int_0^t\int_{\DD}G_{t-s}(x,y)\sigma(u^0(s,y)+\sqrt\e h(\e)Z^{\e,v}(s,y))W(ds,dy)$ in the proof of the Condition ({\bf a}),   we obtain that  $Z^v$  is a continuous mapping from $v\in\HH_N$ into $\EE_{\alpha}$.  Since $\HH_N$ is compact in the weak topology of $L^2([0,T]\times \DD)$, $\{Z^v;v\in\HH_N\}$ is a compact subset of $\EE_{\alpha}$. That is  the Condition ({\bf b}).

The proof is complete.
\end{proof}

\vskip0.3cm


\end{document}